\documentclass[journal]{IEEEtran}

\usepackage{amsmath,amssymb,amsfonts}
\usepackage{mathtools}
\usepackage{booktabs}
\usepackage{array}
\usepackage{cite, xcolor}

\newcolumntype{L}[1]{%
  >{\raggedright\arraybackslash}m{#1}%
}
\newcommand{\supp}{\operatorname{supp}}
\newcommand{\N}{\mathcal N}
\newcommand{\Sn}{\mathcal S_n}
\newcommand{\Tzero}{\mathcal T_0}
\newcommand{\TB}{\mathcal T_{\mathrm B}}
\newcommand{\TAS}{\mathcal T_{\mathrm{AS}}}
\newcommand{\TNE}{\mathcal T_{\mathrm{NE}}}
\newcommand{\U}{\mathcal U}
\newcommand{\E}{\mathcal E}
\newcommand{\one}{\mathbf 1}
\newcommand{\ellv}{\operatorname{lev}_{\mathrm{AS}}}
\newcommand{\ellne}{\operatorname{lev}_{\mathrm{NE}}}
\newcommand{\hscr}{h_{\mathrm{scr}}}

\newtheorem{definition}{Definition}
\newtheorem{lemma}{Lemma}
\newtheorem{theorem}{Theorem}
\newtheorem{proposition}{Proposition}

\newtheorem{remark}{Remark}
\newtheorem{example}{Example}

\begin{document}

\title{Sarymsakov-Type Semigroups With Support Dominance and Nested Mixing Cores}

\author{Shun-Pin Hsu,~\IEEEmembership{Member,~IEEE}%
\thanks{The author is with the Department of Electrical Engineering, National Chung Hsing University, Taichung 402, Taiwan (e-mail: shsu@nchu.edu.tw).}%
\thanks{This work was supported in part by the National Science and Technology Council, Taiwan, under Grant NSTC 115-2221-E-005-087.}}

\maketitle

\begin{abstract}
This note develops three multiplication-closed families of stochastic matrices for consensus under arbitrary switching. The first is a support-dominance enlargement of an earlier power-generated construction, in which exact support-pattern matching is relaxed to a dominance-type condition without losing closure under multiplication. The second is a fixed-core class with an exact uniform scrambling horizon and corresponding convergence-rate guarantees for switching products. The third and main contribution is the almost Sarymsakov class, obtained by allowing variable mixing-core dimensions under suitable structural conditions. This new semigroup class properly contains the classical Sarymsakov class and admits a dimension-only scrambling-horizon bound. Examples illustrate the necessity of the proposed structural assumptions and clarify the sharpness or limitations of the derived bounds.
\end{abstract}

\begin{IEEEkeywords}
Consensus set, products of stochastic matrices, Sarymsakov matrix, scrambling matrix, semigroup, SIA matrix.
\end{IEEEkeywords}

\section{Introduction}
Products of stochastic matrices are central to nonhomogeneous Markov chains, distributed coordination, learning over networks, and switched consensus. Fix $n\ge2$, let $\Sn$ be the set of $n\times n$ row-stochastic matrices, and consider
\begin{equation}
 x(k+1)=P(k)x(k),
 \label{eq:system}
\end{equation}
where $x(k)\in\mathbb R^n$ is the column state and $P(k)\in\Sn$. Consensus means $x(k)\to c\one$ for every initial state and is governed by the backward products $P(k-1)\cdots P(0)$. A fundamental objective is therefore to identify multiplication-closed matrix classes whose compact subfamilies generate convergent products under arbitrary switching. When such a class has a finite uniform scrambling horizon, it also yields blockwise contraction estimates for the consensus disagreement.

The classical theory of weak ergodicity and products of indecomposable, aperiodic stochastic matrices was developed in \cite{hajnal1958,sarymsakov1961,wolfowitz1963,seneta1981,hartfielseneta1990,hartfiel2002}. Sarymsakov matrices provide a particularly useful support-defined semigroup, and every compact subset of the class is a consensus set; related graph-composition and asynchronous results appear in \cite{cao2008graph,xia2014}. Nevertheless, the Sarymsakov class is a proper subset of the stochastic, indecomposable, and aperiodic (SIA) matrices. Enlarging it while retaining multiplication closure is delicate: a uniform SIA-index bound is not preserved in general, and existing constructive extensions are tied to prescribed zero--nonzero patterns or finitely many power patterns \cite{xia2019,hsu2023}. This motivates two questions: can exact support equality be relaxed to support inclusion, and can reference-pattern-independent semigroups accommodate factors with different effective mixing dimensions?

Recent studies have examined complementary aspects of stochastic-matrix products, including ergodicity coefficients, convergence rates, complexity of short convergent products, and finite-block contraction \cite{depasquale2024,xia2025,chevalier2017,ofir2025,ofirmorse2026}. The present paper addresses a different construction problem: it seeks support-defined semigroups whose admissibility conditions are preserved by multiplication and whose finite scrambling horizons follow from explicit structural assumptions.

The principal contribution of the note is the proposal of a new stochastic-matrix semigroup, termed the \emph{almost Sarymsakov class}. It allows the effective ordered mixing-core dimension to vary among factors while preserving multiplication closure; row-allowable access then ensures SIA behavior and consensus. We prove level monotonicity, proper containment of the classical Sarymsakov class for $n\ge3$, and a dimension-only scrambling-horizon bound. Two supporting results build the theory: a support-dominance upper hull that enlarges the equal-pattern method of \cite{hsu2023}, and a fixed order-$r$ core class  with the exact horizon $\min\{r,n-1\}$ and compact-family convergence rates. Examples establish strict inclusions, low-dimensional sharpness, and the necessity of access, higher-order compatibility, and a common ordering.

The remainder of the note is organized as follows. Section~II gives the support-based preliminaries. Section~III develops the power-generated class. Sections~IV and V present the fixed-core and variable-core constructions, Section~VI compares the classes, and Section~VII concludes the note.

\section{Preliminaries}
Let $\N:=\{1,\ldots,n\}$. The directed graph of a nonnegative matrix $A=[a_{ij}]$ has the arc $i\to j$ whenever $a_{ij}>0$. A nonnegative matrix is \emph{row-allowable} if every row has a positive entry. For square $A$ and $I\subseteq\N$, define the \emph{consequent set} of $I$ under $A$ by
\begin{equation}
 F_A(I):=\{j\in\N:a_{ij}>0\text{ for some }i\in I\}.
 \label{eq:F}
\end{equation}
For compatible nonnegative matrices,
\begin{equation}
 F_{AB}(I)=F_B(F_A(I)).
 \label{eq:composition}
\end{equation}
Write $A\succeq B$ when $\supp(B)\subseteq\supp(A)$. Thus $A\succeq B$ means that every arc of $B$ is retained in $A$, although the positive weights may be different, and it implies $F_A(I)\supseteq F_B(I)$.

A row-allowable square nonnegative matrix is \emph{scrambling} if every pair of its rows has a common positive column. This is a support property and is therefore meaningful even when the matrix is not stochastic. A stochastic matrix $P$ is SIA if $P^k\to\one v^{\mathsf T}$ for some probability vector $v$. A row-allowable square nonnegative matrix $A$ is \emph{GE} if, for every pair of disjoint nonempty sets $I,J$, either $F_A(I)\cap F_A(J)\ne\varnothing$, or
\begin{equation}
 |F_A(I)\cup F_A(J)|\ge |I\cup J|.
 \label{eq:GE}
\end{equation}
It is \emph{GR} if the inequality is strict whenever the consequent sets are disjoint. Thus GE and GR mean nonstrict and strict expansion, respectively. A stochastic GR matrix is called a \emph{Sarymsakov matrix}, or simply \emph{Sarymsakov}; the class is denoted by $\Tzero$.

Let $e_i$ be the $i$th canonical column vector and $\|\cdot\|_1$ the vector $\ell_1$ norm. For $P\in\Sn$ and a column vector $x\in\mathbb R^n$, define the \emph{Dobrushin coefficient} and disagreement seminorm by
\begin{align}
 \delta(P)&:=\frac12\max_{i,j}\|e_i^{\mathsf T}P-e_j^{\mathsf T}P\|_1,
 \label{eq:delta}\\
 \Delta(x)&:=\max_i x_i-\min_i x_i.
 \label{eq:disagreement}
\end{align}
The following standard properties are included for completeness; see also \cite{seneta1981,hartfiel2002}.

\begin{lemma}[Dobrushin contraction]
\label{lem:dobrushin}
For $P,Q\in\Sn$ and $x\in\mathbb R^n$,
\begin{equation}
 \Delta(Px)\le\delta(P)\Delta(x),
 \qquad
 \delta(PQ)\le\delta(P)\delta(Q).
 \label{eq:dobrushinproperties}
\end{equation}
Moreover, a stochastic matrix is scrambling if and only if its Dobrushin coefficient is strictly less than one.
\end{lemma}
% skip the proof
%\begin{IEEEproof}
%Let $p_i^{\mathsf T}=e_i^{\mathsf T}P$, $m=\min_j x_j$, and $M=\max_j x_j$. Since $(p_i-p_j)^{\mathsf T}\one=0$, the positive and %negative parts of $p_i-p_j$ have the same total mass. Hence
%\[
% |(Px)_i-(Px)_j|
 %\le \tfrac12\|p_i-p_j\|_1(M-m).
%\]
%Maximization over $i,j$ gives the first inequality in \eqref{eq:dobrushinproperties}.

%For probability rows $u,v$, let $d=\|u-v\|_1/2$. If $d=0$, the claim is immediate. Otherwise, set $w_i=\min\{u_i,v_i\}$ and normalize the disjoint residuals as $\widehat u=(u-w)/d$ and $\widehat v=(v-w)/d$. Since $\widehat uQ$ and $\widehat vQ$ are convex combinations of the rows of $Q$,
%\[
 %\|uQ-vQ\|_1
 %=d\|\widehat uQ-\widehat vQ\|_1
 %\le 2d\,\delta(Q)
 %=\delta(Q)\|u-v\|_1.
%\]
%Applying this estimate to $u=e_i^{\mathsf T}P$ and $v=e_j^{\mathsf T}P$, and then maximizing, proves the second inequality. Finally,
%\[
% \delta(P)=1-\min_{i,j}\sum_{\ell=1}^n\min\{p_{i\ell},p_{j\ell}\},
%\]
%so $\delta(P)<1$ exactly when every row pair has a common positive column.
%\end{IEEEproof}

It follows from Lemma~\ref{lem:dobrushin} that scrambling is preserved by stochastic multiplication on either side. Let $\mathcal C^h$ be the set of all $h$-factor products from $\mathcal C\subseteq\Sn$, and let $\mathcal S_{\rm scr}$ be the scrambling class within $\Sn$. Its \emph{uniform scrambling horizon}, when finite, is
\begin{equation}
 \hscr(\mathcal C):=\min\{h\ge1:\mathcal C^h\subseteq\mathcal S_{\rm scr}\}.
 \label{eq:hscr}
\end{equation}
Thus a stated horizon is exact only when minimality is proved. We also use the standard facts that $\Tzero$ is a semigroup, every product of $n-1$ Sarymsakov matrices is scrambling, and a compact family is a consensus set if every finite product is SIA \cite{seneta1981,hartfielseneta1990,hartfiel2002,wolfowitz1963}.

\begin{lemma}[Support monotonicity and composition]
\label{lem:support}
The following statements hold.
\begin{enumerate}
\item Let $A$ and $B$ be row-allowable square nonnegative matrices with $A\succeq B$. If $B$ is GE, GR, or scrambling, then $A$ has the same property.
\item Let $P,Q\in\Sn$ with $P\succeq Q$. If $Q$ is SIA, then $P$ is SIA.
\item Let $A,B$ be row-allowable square nonnegative matrices and $M\succeq AB$. If $A$ and $B$ are GE, then $M$ is GE. If one factor is GR and the other is GE, then $M$ is GR. The same conclusions hold for $M\succeq BA$.
\end{enumerate}
\end{lemma}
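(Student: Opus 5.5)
The whole lemma rests on one observation: $A\succeq B$ gives $F_A(I)\supseteq F_B(I)$ for every $I\subseteq\N$. Each part then reduces to checking that the relevant condition is monotone under enlarging consequent sets. I will treat the three items in order.

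\emph{Item 1.} Since $A\succeq B$, we have $F_A(I)\supseteq F_B(I)$ for every $I$.
\begin{itemize}
\item \emph{Scrambling.} Fix rows $i,j$. A common positive column $\ell$ of $B$ lies in $F_B(\{i\})\cap F_B(\{j\})\subseteq F_A(\{i\})\cap F_A(\{j\})$, so it is a common positive column of $A$.
\item \emph{GE.} Take disjoint nonempty $I,J$. If $F_A(I)\cap F_A(J)\neq\varnothing$, there is nothing to prove. Otherwise $F_B(I)\cap F_B(J)\subseteq F_A(I)\cap F_A(J)=\varnothing$. The GE property of $B$ then gives $|F_B(I)\cup F_B(J)|\ge|I\cup J|$. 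Enlarging the union yields $|F_A(I)\cup F_A(J)|\ge|I\cup J|$.
\item \emph{GR.} The same argument works with strict inequality.
\end{itemize}
Row-allowability of $A$ is assumed, so no extra check is needed.

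\emph{Item 2.} I would use the standard fact that, for a stochastic matrix, being SIA is a property of its support pattern alone. Concretely, $P$ is SIA if and only if the graph of $P$ has exactly one closed (essential) class and that class is aperiodic. Equivalently, some power $P^m$ has a positive column, i.e.\ $P^m$ is scrambling of a special type.

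I prefer the characterization via a positive column: $P$ is SIA iff there is a $k$ with $\bigcap_i F_{P^k}(\{i\})\neq\varnothing$. The forward direction is immediate from the limit $\one v^{\mathsf T}$: pick $j$ with $v_j>0$; then column $j$ of $P^k$ is positive for large $k$. For the converse, if $P^k$ has a positive column, then $P^k$ is scrambling. By Lemma~\ref{lem:dobrushin}, $\delta(P^{km})\le\delta(P^k)^m\to0$, so the rows of $P^t$ merge. Since $P^t$ is stochastic, every limit point is rank one, and Cauchy-ness of the common row follows from $\Delta$-contraction applied columnwise.

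With this characterization the monotonicity is clear. From $P\succeq Q$ we get $P^k\succeq Q^k$: induct using \eqref{eq:composition}, so that $F_{P^k}(\{i\})\supseteq F_{Q^k}(\{i\})$. Hence a positive column of $Q^k$ remains a positive column of $P^k$.

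\emph{Item 3.} By \eqref{eq:composition}, $F_{AB}(I)=F_B(F_A(I))$. By item 1 it suffices to show that $AB$ itself is GE, respectively GR; the product is row-allowable because $A$ and $B$ are. Take disjoint nonempty $I,J$ with $F_{AB}(I)\cap F_{AB}(J)=\varnothing$, and set $I'=F_A(I)$ and $J'=F_A(J)$. These are nonempty by row-allowability.

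If $I'\cap J'\neq\varnothing$, then any common element of $I'$ and $J'$ has consequents lying in both $F_B(I')$ and $F_B(J')$. Row-allowability of $B$ guarantees such a consequent exists, which contradicts disjointness. So $I'$ and $J'$ are disjoint. Then:
\begin{itemize}
\item the GE/GR property of $A$ gives $|I'\cup J'|\ge|I\cup J|$, with strict inequality if $A$ is GR;
\item the GE/GR property of $B$, applied to $I'$ and $J'$, gives $|F_B(I')\cup F_B(J')|\ge|I'\cup J'|$, with strict inequality if $B$ is GR.
\end{itemize}
Chaining the two inequalities gives the claim for $AB$. The case $M\succeq BA$ is symmetric.

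The main obstacle is item 2: it is the only place where a genuinely non-combinatorial fact is needed, namely that SIA is determined by the existence of a power with a positive column. I would either cite this from \cite{seneta1981,hartfiel2002} or give the short contraction argument sketched above.
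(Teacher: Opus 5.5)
Your proposal is correct and follows the paper's proof essentially step for step. Item~1 uses consequent-set inclusion, item~2 uses $P^k\succeq Q^k$ together with a support-monotone power criterion for SIA, and item~3 uses the same ``first show $F_A(I)\cap F_A(J)=\varnothing$ via row-allowability of $B$, then chain the two expansion inequalities'' argument. The differences are cosmetic: the paper cites the criterion ``some power is scrambling'' from \cite{hsu2023}, whereas you use, and sketch a proof of, the equivalent ``some power has a positive column''; and you route item~3 through $AB$ and item~1 rather than working with $M$ directly, which is legitimate since $M\succeq AB$ forces $M$ to be row-allowable.
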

\begin{IEEEproof}
For part 1, consequent-set inclusion gives $F_A(I)\supseteq F_B(I)$ for every $I$. An intersection present for $B$ remains present for $A$; when the consequent sets for $A$ are disjoint, the corresponding sets for $B$ are also disjoint, and the required cardinality inequality transfers directly. The scrambling claim is the special case of singleton row sets.

For part~2, note that $Q$ is SIA if there exists an integer
$k\geq1$ such that $Q^k$ is scrambling~\cite[Theorem~5]{hsu2023}. If $P\succeq Q$ then $P^k\succeq Q^k$, proving $P$ is SIA as well since scrambling is preserved under support enlargement.

For part 3, suppose $F_M(I)$ and $F_M(J)$ are disjoint. Then $F_{AB}(I)$ and $F_{AB}(J)$ are disjoint. Row-allowability of $B$ implies that $F_A(I)$ and $F_A(J)$ are disjoint: otherwise, a common index would have a positive consequent under $B$, producing an intersection after composition. Applying the expansion conditions first to $A$ and then to $B$ gives
\begin{align*}
 |F_M(I)\cup F_M(J)|
 &\ge |F_{AB}(I)\cup F_{AB}(J)|\\
 &\ge |F_A(I)\cup F_A(J)|\\
 &\ge |I\cup J|.
\end{align*}
At least one inequality is strict when one factor is GR. The proof for $BA$ is identical.
\end{IEEEproof}

\begin{remark}[Why nonnegative blocks are allowed]
\label{rem:supportonly}
The GE, GR, and scrambling properties depend only on support. This is important below because a leading principal block of a stochastic matrix is generally substochastic. Row normalization preserves its support and therefore preserves all three properties. By contrast, the SIA statement in Lemma~\ref{lem:support} is restricted to stochastic matrices because it concerns powers and terminal probability flow, not only the zero--nonzero pattern.
\end{remark}

\begin{lemma}[Products of nonnegative GR blocks]
\label{lem:normalized}
Let $A_1,\ldots,A_{r-1}$ be row-allowable nonnegative $r\times r$ matrices with the GR property, where $r\ge2$. Then $A_1\cdots A_{r-1}$ is scrambling.
\end{lemma}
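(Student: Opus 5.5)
The idea is the classical one for Sarymsakov products: pick two rows, follow the consequent sets of the two singletons through the product, and show that if they stay disjoint their combined size must keep growing until it exceeds $r$. Fix distinct indices $i\ne j$ in $\{1,\ldots,r\}$. Set $I_0=\{i\}$, $J_0=\{j\}$, and for $k=1,\ldots,r-1$ define
\[
I_k:=F_{A_k}(I_{k-1}),\qquad J_k:=F_{A_k}(J_{k-1}).
\]
By the composition rule \eqref{eq:composition}, $I_{r-1}=F_{A_1\cdots A_{r-1}}(\{i\})$, and likewise for $J_{r-1}$. So the product is scrambling exactly when $I_{r-1}\cap J_{r-1}\ne\varnothing$ for every such pair. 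Row-allowability of the product is immediate, since a product of row-allowable matrices is row-allowable.

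I argue by contradiction and suppose $I_{r-1}\cap J_{r-1}=\varnothing$. First, disjointness propagates backward. If $I_{k-1}$ and $J_{k-1}$ shared an index, then row-allowability of $A_k$ would give that index a positive consequent, which would lie in both $I_k$ and $J_k$. This is the same observation used in part~3 of Lemma~\ref{lem:support}. By induction, $I_k\cap J_k=\varnothing$ for all $k=0,\ldots,r-1$. Every $I_k,J_k$ is also nonempty, again by row-allowability.

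Next, apply the GR property of $A_k$ to the disjoint nonempty pair $I_{k-1},J_{k-1}$. Their consequent sets $I_k,J_k$ are disjoint, so the strict inequality in \eqref{eq:GE} gives
\[
|I_k\cup J_k|\ge |I_{k-1}\cup J_{k-1}|+1 .
\]
Starting from $|I_0\cup J_0|=2$ and iterating $r-1$ times yields $|I_{r-1}\cup J_{r-1}|\ge r+1$. This is impossible for subsets of an $r$-element index set. Hence $I_{r-1}\cap J_{r-1}\ne\varnothing$, and since $i,j$ were arbitrary, the product is scrambling.

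The one point that needs care is that GR is a support property and never uses stochasticity (Remark~\ref{rem:supportonly}). The argument therefore applies verbatim to substochastic or otherwise nonnegative blocks. The only structural input beyond GR is row-allowability, which is needed both to propagate disjointness backward and to keep all the sets nonempty. The case $r=2$ is the base case, with a single factor: GR alone forces the two singleton consequent sets to intersect, because otherwise their union would need at least $3$ of the $2$ indices.
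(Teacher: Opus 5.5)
Your proof is correct, but it takes a different route from the paper.

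The paper's argument is indirect. It row-normalizes each block, $\widehat A_i=\operatorname{diag}(A_i\one)^{-1}A_i$, and observes that each $\widehat A_i$ is Sarymsakov with the same support as $A_i$. It then invokes the classical fact that a product of $r-1$ Sarymsakov matrices of order $r$ is scrambling. Finally, it transfers the conclusion back to $A_1\cdots A_{r-1}$ by noting that the support of a product of nonnegative matrices depends only on the supports of the factors.

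You instead run the consequent-set counting argument directly on the nonnegative blocks. Row-allowability propagates disjointness of $I_k,J_k$ backward and keeps every set nonempty. The GR property then forces $|I_k\cup J_k|$ to grow strictly at each of the $r-1$ steps, which is impossible inside an $r$-element index set.

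The paper's route is shorter, but it delegates the combinatorial core to the cited classical theorem. It also needs two extra ingredients: the support-invariance observation for products, and positive row sums so that the normalization is well defined. Your route is self-contained and makes explicit that only supports and row-allowability matter. Specialized to stochastic factors, it reproves the classical $(n-1)$-factor Sarymsakov scrambling result that the paper quotes as a standard fact.
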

\begin{IEEEproof}
Set $\widehat A_i=\operatorname{diag}(A_i\one)^{-1}A_i$. Each $\widehat A_i$ is stochastic, has the same support as $A_i$, and is Sarymsakov. Hence $\widehat A_1\cdots\widehat A_{r-1}$ is scrambling. For nonnegative factors, a product entry is positive exactly when there is a compatible positive path through the factor supports. Factorwise identical supports therefore give identical product supports, and $A_1\cdots A_{r-1}$ is scrambling as well.
\end{IEEEproof}

\begin{lemma}[Compact-family estimate]
\label{lem:compact}
Suppose every product of $h$ matrices from a class $\mathcal C\subseteq\Sn$ is scrambling. For a compact $\mathcal P\subseteq\mathcal C$, define
\begin{equation}
 \theta_{\mathcal P,h}:=\max_{P_1,\ldots,P_h\in\mathcal P}
 \delta(P_h\cdots P_1).
 \label{eq:theta}
\end{equation}
Then $\theta_{\mathcal P,h}<1$, and every sequence in $\mathcal P$ satisfies
\begin{equation}
 \delta(P(k-1)\cdots P(0))
 \le \theta_{\mathcal P,h}^{\lfloor k/h\rfloor}.
 \label{eq:compactrate}
\end{equation}
Consequently, $\Delta(x(k))\le\theta_{\mathcal P,h}^{\lfloor k/h\rfloor}\Delta(x(0))$.
\end{lemma}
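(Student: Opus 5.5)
The plan has three steps: show the maximum defining $\theta_{\mathcal P,h}$ is attained and lies below one, split the backward product into $h$-blocks, and pass to the state through Lemma~\ref{lem:dobrushin}.

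\emph{Step 1: $\theta_{\mathcal P,h}<1$.} The map $(P_1,\ldots,P_h)\mapsto\delta(P_h\cdots P_1)$ is continuous on $\mathcal P^h$. Matrix multiplication is continuous, and $\delta$ is a maximum of finitely many $\ell_1$ norms of linear functions of the entries. Since $\mathcal P^h$ is compact, the maximum in \eqref{eq:theta} is attained at some tuple $(P_1^\ast,\ldots,P_h^\ast)$. The product $P_h^\ast\cdots P_1^\ast$ lies in $\mathcal C^h$, so it is scrambling by hypothesis. By the last statement of Lemma~\ref{lem:dobrushin}, its Dobrushin coefficient is strictly below one, hence $\theta_{\mathcal P,h}<1$. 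This is the only step needing care: without attainment, a supremum of values each below one could still equal one. Compactness removes that possibility.

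\emph{Step 2: block decomposition.} Write $k=qh+s$ with $q=\lfloor k/h\rfloor$ and $0\le s<h$. Split the backward product as
\[
P(k-1)\cdots P(0)=R\,B_{q}\cdots B_1,
\]
where $B_m=P(mh-1)\cdots P((m-1)h)$ and $R=P(k-1)\cdots P(qh)$ (interpreted as the identity if $s=0$). Submultiplicativity of $\delta$ from Lemma~\ref{lem:dobrushin} gives $\delta(P(k-1)\cdots P(0))\le\delta(R)\prod_m\delta(B_m)$. Each $B_m$ is a product of $h$ matrices from $\mathcal P$ in the order used in \eqref{eq:theta}, so $\delta(B_m)\le\theta_{\mathcal P,h}$. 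Every stochastic matrix satisfies $\delta\le1$, since two probability rows are at $\ell_1$ distance at most $2$; hence $\delta(R)\le1$. Together these give \eqref{eq:compactrate}. When $k<h$ we have $q=0$ and the bound reduces to $\delta\le1$.

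\emph{Step 3: the state.} Since $x(k)=P(k-1)\cdots P(0)x(0)$, the first inequality of Lemma~\ref{lem:dobrushin} combined with Step 2 yields $\Delta(x(k))\le\theta_{\mathcal P,h}^{\lfloor k/h\rfloor}\Delta(x(0))$.
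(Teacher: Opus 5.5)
Your proposal is correct and follows the paper's proof essentially step for step. Compactness of $\mathcal P^h$ and continuity give attainment of the maximum, and the scrambling characterization in Lemma~\ref{lem:dobrushin} then gives $\theta_{\mathcal P,h}<1$; splitting the backward product into $\lfloor k/h\rfloor$ full blocks and one incomplete block with $\delta\le1$, then applying submultiplicativity, yields \eqref{eq:compactrate}. You are somewhat more explicit than the paper: you explain why attainment matters, treat the $s=0$ and $k<h$ cases, and use the first inequality of Lemma~\ref{lem:dobrushin} for the state bound rather than citing the definition \eqref{eq:disagreement}.
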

\begin{IEEEproof}
The product map and $\delta$ are continuous, so the maximum in \eqref{eq:theta} is attained on the compact set $\mathcal P^h$. Every maximizing product is scrambling, whence the maximum is strictly below one. Partition a product of length $k$ into $\lfloor k/h\rfloor$ consecutive full blocks and one incomplete block. The latter has Dobrushin coefficient at most one, and repeated use of submultiplicativity gives \eqref{eq:compactrate}. The disagreement estimate follows from \eqref{eq:disagreement}.
\end{IEEEproof}

\section{A Power-Generated Support Upper Hull}
The relation $A\succeq B$ is a preorder on weighted matrices and becomes a partial order after matrices with the same support are identified. For $A\in\Sn$, define the \emph{principal support upper set}
\begin{equation}
 \uparrow A:=\{P\in\Sn:\supp(A)\subseteq\supp(P)\}.
 \label{eq:principalupper}
\end{equation}
Thus $\uparrow A$ contains all stochastic matrices that retain every positive position of $A$, while allowing additional positive positions and different positive weights. For a collection $\mathcal G\subseteq\Sn$, define its \emph{support upper hull} by
\begin{equation}
 \uparrow\mathcal G:=\bigcup_{A\in\mathcal G}\uparrow A.
 \label{eq:upperhull}
\end{equation}
These are support-inclusion notions; they are not convex cones under matrix addition or scalar multiplication.

Let $R\in\Sn$ be SIA and GE, and define
\begin{equation}
 K(R):=\{k\ge1:R^k\notin\Tzero\}.
 \label{eq:KR}
\end{equation}
Every power of $R$ is GE by Lemma~\ref{lem:support}. Because a sufficiently large power of an SIA matrix is scrambling, and therefore Sarymsakov, $K(R)$ is finite. Moreover, if $R^k$ is GR, then $R^{k+\ell}=R^kR^\ell$ is GR for every $\ell\ge1$, because $R^\ell$ is GE. Thus $K(R)$ is empty or an initial segment $\{1,\ldots,h_R\}$.

The class
\begin{equation}
 \E(R):=\Tzero\cup\bigcup_{k\in K(R)}
 \{P\in\Sn:\supp(P)=\supp(R^k)\}
 \label{eq:E}
\end{equation}
is the equal-pattern semigroup introduced in \cite{hsu2023}, after omitting power patterns whose equal-support classes already lie in $\Tzero$. It is therefore the existing baseline. The new class proposed here is
\begin{equation}
 \U(R):=\Tzero\cup\uparrow\{R^k:k\ge1\}
 =\Tzero\cup\bigcup_{k\ge1}\uparrow R^k.
 \label{eq:U}
\end{equation}
Thus $\U(R)$ is the support upper hull of all positive powers of $R$: it replaces exact pattern equality by support inclusion and may strictly contain $\E(R)$.

\begin{theorem}[Finite representation and closure]
\label{thm:upper}
The class $\U(R)$ admits the finite representation
\begin{equation}
 \U(R)=\Tzero\cup\bigcup_{k\in K(R)}\uparrow R^k.
 \label{eq:Ufinite}
\end{equation}
It is a semigroup, every compact subset is a consensus set, and $\E(R)\subseteq\U(R)$.
\end{theorem}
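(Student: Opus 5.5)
The plan has four parts, taken in order: the finite representation, the inclusion $\E(R)\subseteq\U(R)$, semigroup closure, and the consensus property.

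\emph{Finite representation.} If $k\notin K(R)$, then $R^k\in\Tzero$. By Lemma~\ref{lem:support}(1), every $P\in{\uparrow}R^k$ is then GR, and being stochastic it is Sarymsakov. Hence ${\uparrow}R^k\subseteq\Tzero$, and only the indices in $K(R)$ contribute new matrices; this gives \eqref{eq:Ufinite}.

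\emph{Inclusion $\E(R)\subseteq\U(R)$.} If $\supp(P)=\supp(R^k)$ with $k\in K(R)$, then $P\in{\uparrow}R^k$. The $\Tzero$ part of $\E(R)$ is contained in $\U(R)$ by definition.

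\emph{Semigroup closure.} Take $P,Q\in\U(R)$ and split into cases.
\begin{itemize}
\item If both lie in $\Tzero$, then $PQ\in\Tzero$ because $\Tzero$ is a semigroup.
\item If $P\in{\uparrow}R^k$ and $Q\in{\uparrow}R^\ell$, then $\supp(PQ)\supseteq\supp(R^kR^\ell)$. This holds because an entry of a product of nonnegative matrices is positive exactly when some compatible path exists, and enlarging factor supports only adds paths. Hence $PQ\in{\uparrow}R^{k+\ell}\subseteq\U(R)$.
\item If one factor, say $P$, is Sarymsakov (GR) and $Q\in{\uparrow}R^\ell$, note that $R^\ell$ is GE, so $Q$ is GE by Lemma~\ref{lem:support}(1). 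Then $PQ\succeq PQ$ and Lemma~\ref{lem:support}(3) make $PQ$ GR, hence $PQ\in\Tzero$. The order $QP$ is handled symmetrically.
\end{itemize}
The mixed case is the only one that needs more than support bookkeeping. Its key point is that every element of ${\uparrow}R^\ell$ inherits GE from $R$ through powers and support enlargement.

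\emph{Consensus.} By the standard fact cited after \eqref{eq:hscr}, it suffices to show that every finite product from a compact subset is SIA. Closure gives that any such product $M$ lies in $\U(R)$. If $M\in\Tzero$, some power of $M$ (for instance $M^{n-1}$) is scrambling, so $M$ is SIA. Otherwise $M\succeq R^k$ for some $k$. Since $R$ is SIA, $R^k$ is SIA (its powers are powers of $R$, which converge), and Lemma~\ref{lem:support}(2) transfers SIA to $M$.

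I expect the main obstacle to be making the consensus-set reduction precise. The cited criterion needs every finite product from the compact family to be SIA, and this is exactly what closure together with the two SIA transfers supplies. An alternative route uses the finite representation. For a product of at least $h_R+1$ factors, each factor in ${\uparrow}R^{k_i}$ with $k_i\ge1$, the product dominates $R^{\sum k_i}$ with $\sum k_i>h_R$, so it is Sarymsakov. Any product containing a Sarymsakov factor is itself Sarymsakov by the mixed case. Long products therefore lie in $\Tzero$, and a uniform scrambling horizon of $h_R+n-1$ follows. Lemma~\ref{lem:compact} then gives consensus directly with a rate. I would present this route as well, since it avoids relying on the general SIA criterion.
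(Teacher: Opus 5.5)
Your main argument is correct and is essentially the paper's proof. You get the finite representation from Lemma~\ref{lem:support}(1). Closure is by three cases: $PQ\succeq R^{k+\ell}$ when neither factor is Sarymsakov, and GR$\times$GE composition in the mixed case. Every member is SIA by the Sarymsakov property or by Lemma~\ref{lem:support}(2), and the compact finite-product SIA criterion then gives consensus. You place $PQ$ directly in ${\uparrow}R^{k+\ell}$ via the infinite union \eqref{eq:U}, rather than splitting on whether $k+\ell\in K(R)$ as the paper does; that is equally valid.

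The alternative route, however, ends with a false claim: a product of $h_R+n-1$ factors from $\U(R)$ need not be scrambling. Your argument does show that every \emph{window} of $h_R+1$ consecutive factors is Sarymsakov. The classical $(n-1)$-factor result, however, needs $n-1$ \emph{disjoint} such windows, because a single non-Sarymsakov factor placed between Sarymsakov ones may produce no expansion at all. Concretely, take $n=4$ and
\[
 R=\begin{bmatrix}1&0&0&0\\ 1/2&1/2&0&0\\ 0&1&0&0\\ 0&0&1/2&1/2\end{bmatrix}.
\]
\begin{itemize}
\item $R$ is SIA: state $1$ is absorbing and reachable from every state.
\item $R$ is GE but not GR. Its only non-strict configurations are $\{1\},\{3\}\mapsto\{1\},\{2\}$ and $\{1,2,3\},\{4\}\mapsto\{1,2\},\{3,4\}$.
\item $R^2$ has row supports $\{1\},\{1,2\},\{1,2\},\{2,3,4\}$. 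Its only disjoint pair with disjoint consequents is $\{1\},\{4\}$, which expands strictly, so $R^2\in\Tzero$. Hence $K(R)=\{1\}$, $h_R=1$, and $h_R+n-1=4$.
\end{itemize}
Let $S,S'$ be the uniform-weight stochastic matrices with row supports $\{1\},\{2,3\},\{1,2\},\{3,4\}$ and $\{4\},\{1,2\},\{2,3\},\{1,4\}$, respectively. Both are row/column relabelings of $C_4$ in \eqref{eq:Cq}, and GR is invariant under such relabelings, so $S,S'\in\Tzero$. In $RSS'R\in\U(R)^4$, rows $1$ and $3$ have the consequent chains $\{1\}\to\{1\}\to\{1\}\to\{4\}\to\{3,4\}$ and $\{3\}\to\{2\}\to\{2,3\}\to\{1,2,3\}\to\{1,2\}$. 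These are disjoint, so the product is not scrambling and $\hscr(\U(R))\ge5>h_R+n-1$.

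What your argument does support is the horizon $(h_R+1)(n-1)$, obtained by cutting the product into $n-1$ consecutive blocks of length $h_R+1$, each of which is Sarymsakov. With that correction the alternative route is sound. It also yields something the paper's proof does not: an explicit uniform scrambling horizon for $\U(R)$ and, through Lemma~\ref{lem:compact}, a geometric consensus rate for compact subfamilies, without invoking the SIA criterion.
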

\begin{IEEEproof}
If $k\notin K(R)$, then $R^k$ is GR. Lemma~\ref{lem:support} gives $\uparrow R^k\subseteq\Tzero$, so powers outside the finite set $K(R)$ add no new matrices and \eqref{eq:Ufinite} follows.

For closure, let $P,Q\in\U(R)$. If both belong to $\Tzero$, then $PQ\in\Tzero$. If $P$ is GR and $Q\succeq R^b$, then $Q$ is GE and Lemma~\ref{lem:support}, applied directly to the product, shows that both $PQ$ and $QP$ are GR. The same argument covers the symmetric case. Finally, if $P\succeq R^a$ and $Q\succeq R^b$, then nonnegativity gives
\begin{equation}
 PQ\succeq R^{a+b}.
 \label{eq:powerproduct}
\end{equation}
If $a+b\in K(R)$, the product belongs to the corresponding principal support upper set; otherwise $R^{a+b}$ is GR and support monotonicity places $PQ$ in $\Tzero$. Hence $\U(R)$ is closed. Each member is SIA because it is Sarymsakov or support-dominates an SIA power of $R$. Closure therefore implies that every finite product from a compact subset is SIA, and the standard compact finite-product criterion gives consensus. The inclusion $\E(R)\subseteq\U(R)$ is immediate from equality implying support inclusion.
\end{IEEEproof}

\begin{example}[Strict relaxation of equal supports]
\label{ex:upper}
Let
\begin{equation}
 R=\begin{bmatrix}1&0&0\\[1mm]1/2&1/2&0\\[1mm]0&1&0\end{bmatrix},
 \qquad
 P=\begin{bmatrix}1&0&0\\[1mm]1/3&1/3&1/3\\[1mm]0&1&0\end{bmatrix}.
 \label{eq:RP}
\end{equation}
The matrix $R$ is SIA and GE but not GR, whereas $R^2$ is scrambling; hence $K(R)=\{1\}$. Moreover, $P\succeq R$ with unequal supports. For $I=\{1\}$ and $J=\{3\}$, $F_P(I)=\{1\}$ and $F_P(J)=\{2\}$, so $P\notin\Tzero$. Therefore $P\in\U(R)\setminus\E(R)$.
\end{example}

\begin{remark}[Interpretation of the support enlargement]
The class $\E(R)$ requires exact equality with one of the relevant power-support patterns, whereas $\U(R)$ retains every positive position of a power $R^k$ and permits additional positions and redistributed row weights. The maximality statement in Theorem~\ref{thm:upper} is relative to these principal support upper sets generated by the powers of the same prescribed matrix $R$; it is not a claim that $\U(R)$ is the largest possible SIA semigroup containing $\E(R)$. Counterexamples will be given in Remark~\ref{rem:cxpm} to show that the claim is false.

Support inclusion should also not be confused with an additive perturbation. If $P$ and $R^k$ are stochastic and $P=R^k+A$ with $A\ge0$, then $A\one=P\one-R^k\one=0$, and nonnegativity forces $A=0$. A nontrivial enlargement is therefore obtained by changing the support pattern and redistributing row weights, not by adding a nonnegative matrix entrywise.
\end{remark}

\section{A Fixed-Core Block Semigroup}
Fix $r\in\{1,\ldots,n\}$ and partition
\begin{equation}
 P=\begin{bmatrix}P_{11}&P_{12}\\ P_{21}&P_{22}\end{bmatrix}\in\Sn,
 \label{eq:block}
\end{equation}
where $P_{11}\in\mathbb R^{r\times r}$ is the leading principal block and $P_{21}\in\mathbb R^{(n-r)\times r}$ is the lower-left block. Note that if \(P_{12}\) contains positive entries, \(P_{11}\) becomes substochastic.
\begin{definition}
The class $\TB(r)$, for $r\in\{1,2,\cdots,n-1\}$, consists of the matrices $P\in\Sn$ for which $P_{11}$ has the GR property and $P_{21}$ is row-allowable; $\TB(n)=\Tzero$.
\end{definition}

\begin{lemma}
\label{lem:TBsia}
Every matrix in $\TB(r)$ is SIA.
\end{lemma}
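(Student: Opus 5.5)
The plan is to exhibit a power of $P$ that is scrambling and then invoke the criterion already used in the proof of Lemma~\ref{lem:support}, part~2: a stochastic matrix $Q$ is SIA if some power $Q^k$ is scrambling \cite[Theorem~5]{hsu2023}. The case $r=n$ is immediate, since $\TB(n)=\Tzero$ and the product of $n-1$ Sarymsakov matrices is scrambling; so fix $r\le n-1$ and write $A:=P_{11}$ and $\mathcal R:=\{1,\ldots,r\}$ for the core index set.

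First I record an access property: every index $i\in\N$ has an arc into $\mathcal R$ under $P$. For $i\in\mathcal R$, this holds because $A$ has the GR property, and GR is defined only for row-allowable matrices, so row $i$ of $A$ has a positive entry. For $i\notin\mathcal R$, it holds because $P_{21}$ is row-allowable. Hence $F_P(\{i\})\cap\mathcal R\neq\varnothing$ for every $i$.

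Second, I compare the core block of powers of $P$ with powers of $A$. All entries are nonnegative, and paths that stay inside $\mathcal R$ are among the paths counted in $P^{m}$. Therefore the leading $r\times r$ block of $P^{m}$ support-dominates $A^{m}$. For $r\ge2$, Lemma~\ref{lem:normalized} applied with $A_1=\cdots=A_{r-1}=A$ shows that $A^{r-1}$ is scrambling. By support monotonicity (Lemma~\ref{lem:support}, part~1, noting Remark~\ref{rem:supportonly}), for any $k,\ell\in\mathcal R$ there is then a column $c\in\mathcal R$ with $(P^{r-1})_{kc}>0$ and $(P^{r-1})_{\ell c}>0$.

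Third, I combine the two steps through $P^{r}=P\,P^{r-1}$ and the composition rule \eqref{eq:composition}. Given rows $i,j$, choose $k\in F_P(\{i\})\cap\mathcal R$ and $\ell\in F_P(\{j\})\cap\mathcal R$ by the first step. Take the common column $c$ supplied by the second step. Then $(P^r)_{ic}\ge p_{ik}(P^{r-1})_{kc}>0$, and likewise $(P^r)_{jc}>0$, so $P^r$ is scrambling. When $r=1$, the second step is vacuous: every row already has an arc to index $1$, so $P$ itself is scrambling.

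The only delicate point is the second step. There one must be sure that the substochastic block $A$, whose row sums may be less than one, still produces a scrambling power. This is exactly what Lemma~\ref{lem:normalized} provides via row normalization, combined with the observation that paths through $\mathcal R$ survive in the full product $P^{r-1}$.
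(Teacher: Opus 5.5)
Your proof is correct, but it takes a different route from the paper. The paper argues directly on the graph of $P$ in three steps. Row-allowability of $P_{21}$ forces every terminal class to meet the core. The GR property of $P_{11}$ rules out two terminal classes, because their core traces $I,J$ would satisfy $F_{P_{11}}(I)\subseteq I$ and $F_{P_{11}}(J)\subseteq J$. Aperiodicity comes from the unique aperiodic terminal component of the row-normalized Sarymsakov block $\widehat P_{11}$, which must lie inside the full terminal class. You instead reproduce the upper-bound mechanism of Theorem~\ref{thm:TBhorizon} with every factor equal to $P$, showing that $P^{\min\{r,n-1\}}$ is scrambling, and then invoke the criterion that a stochastic matrix with a scrambling power is SIA. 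Your route gives a quantitative statement the lemma does not ask for: an explicit, weight-independent power at which $P$ becomes scrambling, which is the single-matrix case of the fixed-core horizon. It also avoids any period analysis. The cost is that it leans on the scrambling-power criterion and on the $(r-1)$-factor Sarymsakov result packaged in Lemma~\ref{lem:normalized}; since both lemmas you use precede the statement, there is no circularity. The paper's argument needs only the qualitative fact that Sarymsakov matrices are SIA, and it makes the separate role of each hypothesis visible: access excludes a closed group of lower states (cf.\ Example~\ref{ex:rowallowable}), strict expansion excludes a second terminal class, and the core supplies aperiodicity.
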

\begin{IEEEproof}
Let $\mathcal C$ be a terminal class of the full graph. It must intersect the first $r$ states: if $\mathcal C$ contained only lower states, row-allowability of $P_{21}$ would give an edge from every row of $\mathcal C$ into the core, contradicting closedness. Suppose two terminal classes existed, and let $I$ and $J$ be their respective intersections with the core. These sets are nonempty and disjoint. Closedness of the terminal classes implies
\[
 F_{P_{11}}(I)\subseteq I,
 \qquad
 F_{P_{11}}(J)\subseteq J.
\]
Thus the two consequent sets are disjoint and their union has cardinality at most $|I\cup J|$, contradicting the GR property. The full graph therefore has a unique terminal class.

Let $\widehat P_{11}=\operatorname{diag}(P_{11}\one)^{-1}P_{11}$. This row-normalized matrix is Sarymsakov, so its graph has a unique aperiodic terminal component $\mathcal D$. The intersection of the full terminal class with the core is a nonempty $P_{11}$-closed set and hence contains $\mathcal D$. All cycles in $\mathcal D$ remain cycles of the full terminal class. Their lengths have greatest common divisor one, so the full terminal class is aperiodic. The stochastic matrix $P$ is therefore SIA.
\end{IEEEproof}

\begin{theorem}
\label{thm:TBclosure}
For every fixed $r$, the class $\TB(r)$ is a semigroup, and every compact subset is a consensus set. It is also support-upward: if $P\in\TB(r)$ and $Q\in\Sn$ satisfies $Q\succeq P$, then $Q\in\TB(r)$.
\end{theorem}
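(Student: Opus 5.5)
The proof has three parts: support-upwardness, multiplicative closure, and the consensus property.

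\emph{Support-upwardness.} Let $Q\succeq P$ with $P\in\TB(r)$, and use the same block partition \eqref{eq:block}. Then $Q_{11}\succeq P_{11}$ and $Q_{21}\succeq P_{21}$. Row-allowability of $Q_{21}$ is inherited from $P_{21}$. For $Q_{11}$, the only subtlety is that Lemma~\ref{lem:support} part~1 needs $Q_{11}$ row-allowable; this follows from $Q_{11}\succeq P_{11}$ and the row-allowability of $P_{11}$. Part~1 then gives that $Q_{11}$ is GR. The case $r=n$ is exactly part~1 applied to $\Tzero$.

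\emph{Closure.} Take $P,Q\in\TB(r)$ with $r\le n-1$ (the case $r=n$ is the semigroup property of $\Tzero$). Write
\[
(PQ)_{11}=P_{11}Q_{11}+P_{12}Q_{21},\qquad (PQ)_{21}=P_{21}Q_{11}+P_{22}Q_{21}.
\]
By nonnegativity, $(PQ)_{11}\succeq P_{11}Q_{11}$. The matrix $P_{11}$ is GR, and $Q_{11}$ is GR, hence GE. Lemma~\ref{lem:support} part~3 then gives that $(PQ)_{11}$ is GR. Its row-allowability follows from that of $P_{11}$ and $Q_{11}$, which are row-allowable by the definition of GR.

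For $(PQ)_{21}$, fix a lower row $i$. By stochasticity, row $i$ of $P$ has a positive entry, either in the core or in the lower block. If $(P_{21})_{ij}>0$ for some core index $j$, then row-allowability of $Q_{11}$ gives a positive entry in row $i$ of $P_{21}Q_{11}$. Otherwise $(P_{22})_{ij}>0$ for some lower $j$, and row-allowability of $Q_{21}$ gives a positive entry in row $i$ of $P_{22}Q_{21}$. In either case row $i$ of $(PQ)_{21}$ has a positive entry, and $PQ$ is stochastic, so $PQ\in\TB(r)$.

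\emph{Consensus.} By closure, every finite product from a compact subset of $\TB(r)$ lies in $\TB(r)$. Lemma~\ref{lem:TBsia} shows each such product is SIA, and the standard compact finite-product criterion quoted in Section~II gives the consensus property.

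The step needing care is the lower-left block of the product, where both paths (through the core via $P_{21}$, or through the lower block via $P_{22}$) must be handled. Stochasticity of $P$ ensures one of them is available in every row.
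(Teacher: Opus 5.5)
Your proof is correct and follows the paper's argument essentially step for step. It uses the same block identities, Lemma~\ref{lem:support} part~3 with support monotonicity for the leading block, support monotonicity for upwardness, and Lemma~\ref{lem:TBsia} plus closure for the consensus claim. The one deviation is your two-case treatment of $(PQ)_{21}$: since $P_{21}$ is row-allowable by hypothesis, the first case (which is exactly the paper's one-line argument) always applies, so the second case is vacuous and the appeal to stochasticity of $P$ is unnecessary.
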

\begin{IEEEproof}
For $P,Q\in\TB(r)$, the leading block of the product is
\begin{equation}
 (PQ)_{11}=P_{11}Q_{11}+P_{12}Q_{21}\succeq P_{11}Q_{11}.
 \label{eq:leadproduct}
\end{equation}
Both $P_{11}$ and $Q_{11}$ are GR and therefore GE. Lemma~\ref{lem:support} makes the product $P_{11}Q_{11}$ GR, and support monotonicity transfers this property to $(PQ)_{11}$. For the lower-left block,
\begin{equation}
 (PQ)_{21}=P_{21}Q_{11}+P_{22}Q_{21}.
 \label{eq:lowerproduct}
\end{equation}
Each row of $P_{21}$ has a positive entry in some core column, and the corresponding row of the row-allowable matrix $Q_{11}$ has a positive consequent. Hence $P_{21}Q_{11}$, and therefore $(PQ)_{21}$, is row-allowable. This proves $PQ\in\TB(r)$. Lemma~\ref{lem:TBsia} and the compact finite-product characterization give the consensus claim. Finally, support enlargement preserves both the GR property of $P_{11}$ and row-allowability of $P_{21}$.
\end{IEEEproof}

\begin{theorem}[Exact fixed-core scrambling horizon]
\label{thm:TBhorizon}
Assume $n\ge2$ and set
\begin{equation}
 h_{n,r}:=\min\{r,n-1\}.
 \label{eq:hnr}
\end{equation}
Then
\begin{equation}
 \hscr(\TB(r))=h_{n,r}.
 \label{eq:exactTB}
\end{equation}
Consequently, every compact $\mathcal P\subseteq\TB(r)$ satisfies \eqref{eq:compactrate} with $h=h_{n,r}$.
\end{theorem}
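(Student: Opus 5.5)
The proof has two halves: every product of $h_{n,r}$ matrices from $\TB(r)$ is scrambling, and some product of $h_{n,r}-1$ factors is not. The compact-family claim then follows at once from Lemma~\ref{lem:compact}.

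\emph{Upper bound.} Take $P_1,\ldots,P_h\in\TB(r)$ with $h=h_{n,r}$, and let $M=P_1\cdots P_h$ be the ordered product. The case $r=n$ is the classical statement that $n-1$ Sarymsakov factors give a scrambling product, so assume $r\le n-1$ and $h=r$. By the block expansion \eqref{eq:leadproduct}, iterated,
\[
 M_{11}\succeq (P_1)_{11}\cdots(P_r)_{11},
 \qquad
 M_{21}\succeq (P_1)_{21}(P_2)_{11}\cdots(P_r)_{11}.
\]
First, by Lemma~\ref{lem:normalized}, the product $B:=(P_2)_{11}\cdots(P_r)_{11}$ of $r-1$ GR blocks is scrambling when $r\ge2$. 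Since $(P_1)_{11}$ and $(P_1)_{21}$ are row-allowable, each row $i$ of $M$ reaches, within the core and in one step, some core index $c_i$. Row $i$ of $M$ then dominates row $c_i$ of $B$ on the core columns. Two rows $i,j$ give core indices $c_i,c_j$, and rows $c_i,c_j$ of $B$ share a positive column because $B$ is scrambling. Hence $M$ is scrambling.

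For $r=1$ the argument is direct: every row of every factor has a positive entry in column $1$, since $(P)_{11}$ and $(P)_{21}$ are row-allowable and the core is a single column. So one factor is already scrambling. This covers both the lower and upper rows, and the upper bound $\hscr\le h_{n,r}$ follows.

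\emph{Lower bound.} I need a product of $h-1$ factors in $\TB(r)$ that is not scrambling, where $h=\min\{r,n-1\}\ge2$; the case $h=1$ is trivial. I would take a single matrix $S$ with $S^{h-1}$ non-scrambling.
\begin{itemize}
\item For $r\le n-1$, use a shift-type core: on $\{1,\ldots,r\}$ let $S_{11}$ have arcs $1\to1$ and $i\to i-1$ for $i\ge2$. This is the standard Sarymsakov example whose $(r-2)$nd power is not scrambling, since rows $r$ and $r-1$ have disjoint supports. Route every lower row to column $1$ only; the lower rows have positive $S_{21}$, so $S\in\TB(r)$.
\item For $r=n$, take the same shift on $\N$.
\end{itemize}
I must verify that $S_{11}$ is GR, i.e.\ strictly expanding whenever consequent sets are disjoint. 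I expect this to hold because the only self-loop is at $1$ and the descent path forces expansion; the check is to confirm the strict inequality in \eqref{eq:GE} for the disjoint-image case.

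The main obstacle is this explicit lower-bound construction and its GR verification, together with the index bookkeeping that makes $r=n$ and $r=n-1$ both give $n-1$. If the shift core turns out not to be GR, I would fall back to the classical sharp Sarymsakov example for dimension $r$, padded with lower rows mapping to column $1$.
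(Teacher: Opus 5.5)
Your upper bound is correct and follows the paper's argument: one step of $P_1$ puts every row into the core, and the remaining $r-1$ core blocks give a scrambling product by Lemma~\ref{lem:normalized}. The lower bound does not work as proposed.

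The first problem is membership. The pure shift core, with arcs $1\to1$ and $i\to i-1$, is not GR for $r\ge3$. Each row has a single positive entry, so $|F(I)\cup F(J)|\le|I|+|J|$ always holds. For example, $I=\{2\}$ and $J=\{3\}$ have disjoint consequent sets $\{1\}$ and $\{2\}$ with no strict expansion. Hence $S\notin\TB(r)$, and for $r=n\ge3$, $S\notin\Tzero$. You need self-loops along the chain, as in the paper's $C_r$ from \eqref{eq:Cq}, with arcs $i\to i-1$ and $i\to i$. That matrix is GR because $|F_{C_r}(I)|=|I|$ forces $I$ to be an initial segment, and two disjoint nonempty sets cannot both be initial segments. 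Your fallback to such a classical sharp example repairs the case $r=n$: rows $1$ and $n$ of $C_n^{n-2}$ are supported on $\{1\}$ and $\{2,\ldots,n\}$.

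The second problem is more serious. For $2\le r\le n-1$ you need $S^{r-1}$ to be non-scrambling, not $S^{r-2}$. A non-scrambling $(r-2)$nd core power only gives $\hscr(\TB(r))\ge r-1$. The core rows of $S^{r-1}$ are a product of $r-1$ GR blocks, so they are already scrambling among themselves by Lemma~\ref{lem:normalized}. The witness pair must therefore include a lower row, which uses its first step just to enter the core.

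Routing all lower rows to column $1$, the absorbing end of the chain, destroys this. With your shift, $S^{r-1}=\one e_1^{\mathsf T}$. With $C_r$, every row of $S^{r-1}$ contains column $1$. Your fallback keeps the column-$1$ routing, so it fails the same way.

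The missing idea is to let the lower rows enter at the far end, $S_{21}=\one_{n-r}e_r^{\mathsf T}$, as in the paper's $P_{n,r}$ in \eqref{eq:sharpTB}. Then each lower row of $P_{n,r}^{r-1}$ equals $e_r^{\mathsf T}C_r^{r-2}$, supported on $\{2,\ldots,r\}$. Row $1$ stays supported on $\{1\}$, so these rows share no column.
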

\begin{IEEEproof}
We first prove the upper bound. Suppose $r<n$, and let $P^{(1)},\ldots,P^{(r)}\in\TB(r)$. Partition every factor according to the same first $r$ states:
\[
 P^{(j)}=
 \begin{bmatrix}
 P_{11}^{(j)}&P_{12}^{(j)}\\
 P_{21}^{(j)}&P_{22}^{(j)}
 \end{bmatrix},
 \qquad j=1,\ldots,r.
\]
For an initial row $i\le r$, the GR block $P_{11}^{(1)}$ is row-allowable; for $i>r$, the block $P_{21}^{(1)}$ is row-allowable by the definition of $\TB(r)$. Hence there exists a core index $a_i\in\{1,\ldots,r\}$ such that $(P^{(1)})_{ia_i}>0$. If $r=1$, all rows already share the first column. If $r\ge2$, define the core product
\[
 B:=P_{11}^{(2)}P_{11}^{(3)}\cdots P_{11}^{(r)}.
\]
It is scrambling by Lemma~\ref{lem:normalized}. Given initial rows $i$ and $j$, rows $a_i$ and $a_j$ of $B$ share a positive column. Combining the first-step entries $(P^{(1)})_{ia_i}$ and $(P^{(1)})_{ja_j}$ with the corresponding positive core paths shows that rows $i$ and $j$ of $P^{(1)}\cdots P^{(r)}$ share that column. Thus $\hscr(\TB(r))\le r$ when $r<n$. If $r=n$, then $\TB(n)=\Tzero$, and the classical Sarymsakov result gives $\hscr(\TB(n))\le n-1$.

It remains to prove minimality. For $q\ge2$, let $C_q$ be the stochastic matrix defined by
\begin{equation}
 e_1^{\mathsf T}C_q=e_1^{\mathsf T},\qquad
 e_i^{\mathsf T}C_q=\tfrac12(e_{i-1}^{\mathsf T}+e_i^{\mathsf T}),
 \quad 2\le i\le q.
 \label{eq:Cq}
\end{equation}
For a nonempty set $I\subseteq\{1,\ldots,q\}$,
\[
 F_{C_q}(I)=I\cup\{i-1:i\in I,\ i\ge2\}.
\]
Equality $|F_{C_q}(I)|=|I|$ is possible only when $I$ is an initial segment $\{1,\ldots,t\}$. Two disjoint nonempty sets cannot both be initial segments. Therefore, whenever $F_{C_q}(I)$ and $F_{C_q}(J)$ are disjoint, at least one of them expands strictly, and $C_q$ is GR.

For $2\le r<n$, define
\begin{equation}
 P_{n,r}:=
 \begin{bmatrix}
 C_r&0_{r\times(n-r)}\\
 \one_{n-r}e_r^{\mathsf T}&0_{(n-r)\times(n-r)}
 \end{bmatrix}\in\TB(r).
 \label{eq:sharpTB}
\end{equation}
The first row of $P_{n,r}^{r-1}$ is supported on $\{1\}$, whereas every lower row is supported on
$\supp(e_r^{\mathsf T}C_r^{r-2})=\{2,\ldots,r\}$. Thus $P_{n,r}^{r-1}$ is not scrambling, proving $\hscr(\TB(r))\ge r$. For $r=n\ge3$, the first and last rows of $C_n^{n-2}$ have supports $\{1\}$ and $\{2,\ldots,n\}$, respectively, so the classical bound $n-1$ is also minimal. The cases in which $h_{n,r}=1$ are immediate from the definition of a positive horizon. Lemma~\ref{lem:compact} gives the compact-family estimate.
\end{IEEEproof}

\begin{remark}[Network interpretation]
For $r<n$, one factor sends every row into the ordered core and the next $r-1$ core blocks mix the possible entries. The exact horizon is support determined even when weights and supports vary and probability leaves the core; compactness is needed only for the numerical factor $\theta_{\mathcal P,h_{n,r}}$. Thus a small core gives a contraction block much shorter than the ambient-order bound. In consensus terminology, the first $r$ agents form an ordered mixing core, while every remaining agent assigns positive weight to at least one core agent at each time. Links outside the core may switch arbitrarily, and links may also leave the core, provided the defining support conditions remain valid.
\end{remark}

\begin{example}[A compact switched family outside the Sarymsakov class]
\label{ex:switchedfamily}
For $0\le\gamma\le1/2$, define
\begin{equation}
 P(\gamma):=
 \begin{bmatrix}
 1/2&0&0&1/2\\
 1/2&0&\gamma&1/2-\gamma\\
 0&1&0&0\\
 1/2&0&0&1/2
 \end{bmatrix}.
 \label{eq:switchedfamily}
\end{equation}
Let $\mathcal P_{\rm sw}:=\{P(\gamma):0\le\gamma\le1/2\}$. The leading $2\times2$ block of every $P(\gamma)$ is GR, and its lower-left block
\[
 \begin{bmatrix}0&1\\[1mm]1/2&0\end{bmatrix}
\]
is row-allowable. Hence $\mathcal P_{\rm sw}\subset\TB(2)$. On the other hand, for $I=\{3\}$ and $J=\{1,4\}$,
\[
 F_{P(\gamma)}(I)=\{2\},
 \qquad
 F_{P(\gamma)}(J)=\{1,4\},
\]
so the consequent sets are disjoint and their union has cardinality $3=|I\cup J|$. Thus every member of $\mathcal P_{\rm sw}$ lies outside $\Tzero$.

For $a,b\in[0,1/2]$,
\begin{equation}
 P(a)P(b)=
 \begin{bmatrix}
 1/2&0&0&1/2\\
 (1-a)/2&a&0&(1-a)/2\\
 1/2&0&b&1/2-b\\
 1/2&0&0&1/2
 \end{bmatrix}.
 \label{eq:switchedproduct}
\end{equation}
A direct row comparison gives
\begin{equation}
 \delta(P(a)P(b))
 =\max\{a,b,a/2+b\}
 \le\frac34,
 \label{eq:switchedrate}
\end{equation}
with equality at $a=b=1/2$. Therefore, for arbitrary switching $P(k)=P(\gamma_k)$ with $\gamma_k\in[0,1/2]$,
\begin{equation}
 \Delta(x(k))
 \le\left(\frac34\right)^{\lfloor k/2\rfloor}\Delta(x(0)).
 \label{eq:switchedconsensus}
\end{equation}
This family makes the control implication concrete: none of the individual switching matrices is Sarymsakov, yet the fixed-core theorem gives uniform consensus, and the exact two-step coefficient is computable directly.
\end{example}

\begin{example}[Why row-allowability is needed]
\label{ex:rowallowable}
Let
\begin{equation}
 P_0=\begin{bmatrix}1/2&0&1/2\\0&1&0\\0&1&0\end{bmatrix},
 \quad
 Q_0=\begin{bmatrix}1&0&0\\0&1/2&1/2\\1&0&0\end{bmatrix}.
 \label{eq:rowcounter}
\end{equation}
Both matrices are SIA and their leading block of order one is GR. Their lower-left blocks at order one are not row-allowable, however, and
\[
 P_0Q_0=\begin{bmatrix}1&0&0\\0&1/2&1/2\\0&1/2&1/2\end{bmatrix}
\]
has the two terminal classes $\{1\}$ and $\{2,3\}$. The failure is rowwise: it is not enough that some lower state reaches the core; every lower row must do so to rule out a closed lower group.
\end{example}

\section{Nested Expansion Compatibility and Almost Sarymsakov Matrices}
The union $\bigcup_r\TB(r)$ permits different core sizes but is not automatically closed: a factor with a smaller core need not satisfy any expansion property at the larger order selected by another factor. It is useful to separate two roles. Nested GR/GE conditions provide algebraic compatibility under multiplication, while row-allowable access to the selected core supplies the SIA and consensus properties. The nested-expansion class introduced below is therefore an auxiliary algebraic device: its purpose is to isolate the closure mechanism, not to define an additional consensus class.

For $P\in\Sn$ and $s\in\{1,\ldots,n\}$, write
\begin{equation}
 P=
 \begin{bmatrix}
 P^{[s]}&P_{12}^{[s]}\\
 P_{21}^{[s]}&P_{22}^{[s]}
 \end{bmatrix},
 \label{eq:sblock}
\end{equation}
where $P^{[s]}$ is the leading principal block of order $s$. When $s=n$, $P_{21}^{[s]}$ has zero rows and its row-allowability condition is vacuous.

\begin{definition}[Nested-expansion classes]
\label{def:TNE}
For $r\in\{1,\ldots,n\}$, let $\TNE(r)$ consist of the matrices $P\in\Sn$ for which $P^{[r]}$ is GR and $P^{[s]}$ is GE for every $s>r$. Define
\begin{equation}
 \TNE:=\bigcup_{r=1}^n\TNE(r),
 \qquad
 \ellne(P):=\min\{r:P\in\TNE(r)\}.
 \label{eq:TNE}
\end{equation}
A member of $\TNE(r)$ is called expansion-compatible at level $r$.
\end{definition}

\begin{proposition}[Nested-expansion closure]
\label{prop:TNEcompatibility}
If $P\in\TNE(r_P)$ and $Q\in\TNE(r_Q)$, then
\begin{equation}
 PQ,QP\in\TNE(\max\{r_P,r_Q\}).
 \label{eq:TNEcompatibility}
\end{equation}
Consequently, $\TNE$ is a semigroup and
\begin{equation}
 \ellne(PQ)\le\max\{\ellne(P),\ellne(Q)\}.
 \label{eq:TNElevel}
\end{equation}
\end{proposition}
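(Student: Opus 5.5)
The plan is to work blockwise, exactly as in the proof of Theorem~\ref{thm:TBclosure}, but now at every order $s\ge m:=\max\{r_P,r_Q\}$ rather than at a single fixed order. For any $s$, partitioning $P$ and $Q$ as in \eqref{eq:sblock} gives the leading block of the product
\[
 (PQ)^{[s]}=P^{[s]}Q^{[s]}+P_{12}^{[s]}Q_{21}^{[s]}\succeq P^{[s]}Q^{[s]}.
\]
The support domination holds because all terms are nonnegative; when $s=n$ the second term is absent. The analogous identity holds for $(QP)^{[s]}$. So it suffices to control the supports of $P^{[s]}Q^{[s]}$ and $Q^{[s]}P^{[s]}$ and then invoke part~3 of Lemma~\ref{lem:support} with $M=(PQ)^{[s]}$ or $M=(QP)^{[s]}$.

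First I record what the definition gives for each factor at orders $s\ge m$. Since $s\ge r_P$, either $s=r_P$ and $P^{[s]}$ is GR, or $s>r_P$ and $P^{[s]}$ is GE. A GR block is in particular GE, since strict expansion implies nonstrict expansion. Hence $P^{[s]}$ is GE for every $s\ge r_P$, and likewise $Q^{[s]}$ is GE for every $s\ge r_Q$. GE blocks are row-allowable by definition, so the hypotheses of Lemma~\ref{lem:support}, part~3, are met. By Remark~\ref{rem:supportonly}, substochasticity of the blocks is harmless.

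\emph{Order $s=m$.} Without loss of generality $m=r_P$. Then $P^{[m]}$ is GR, and $Q^{[m]}$ is GE because $m\ge r_Q$. Part~3 of Lemma~\ref{lem:support} then makes both $(PQ)^{[m]}$ and $(QP)^{[m]}$ GR. If $r_P=r_Q$, both factors are GR and the conclusion is the same.

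\emph{Orders $s>m$.} Both leading blocks are GE, so both products of leading blocks are GE, and hence $(PQ)^{[s]}$ and $(QP)^{[s]}$ are GE. This shows $PQ,QP\in\TNE(m)$, which is \eqref{eq:TNEcompatibility}.

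The consequences are then immediate. Closure of $\TNE=\bigcup_r\TNE(r)$ under multiplication follows directly from \eqref{eq:TNEcompatibility}. Taking $r_P=\ellne(P)$ and $r_Q=\ellne(Q)$ and using the minimality in the definition of $\ellne$ gives \eqref{eq:TNElevel}.

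The only delicate point is the bookkeeping at $s=m$: I need the factor of smaller level, taken at an order above its own level, to supply at least GE. This is exactly why the definition demands GE at all orders $s>r$, and why the implication GR$\Rightarrow$GE must be noted at $s=r$. I also need the lemma's row-allowability hypothesis for the right-hand factor, and it is guaranteed by GE.
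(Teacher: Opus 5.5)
Your proposal is correct and takes essentially the same route as the paper. Both arguments use the blockwise domination $(PQ)^{[s]}\succeq P^{[s]}Q^{[s]}$ (and likewise for $QP$) at each order $s\ge\max\{r_P,r_Q\}$, then apply part~3 of Lemma~\ref{lem:support}: GR$\times$GE at the top order and GE$\times$GE above it. Your explicit notes that GR implies GE and that GE supplies the row-allowability the lemma needs only make precise what the paper's proof leaves implicit.
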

\begin{IEEEproof}
Let $r=\max\{r_P,r_Q\}$. At order $r$, one of $P^{[r]}$ and $Q^{[r]}$ is GR and the other is GE; if $r_P=r_Q$, both are GR. From the block multiplication associated with \eqref{eq:sblock},
\[
 (PQ)^{[r]}
 =P^{[r]}Q^{[r]}+P_{12}^{[r]}Q_{21}^{[r]}
 \succeq P^{[r]}Q^{[r]}.
\]
Lemma~\ref{lem:support} therefore makes $(PQ)^{[r]}$ GR. For every $s>r$, both $P^{[s]}$ and $Q^{[s]}$ are GE, and
\[
 (PQ)^{[s]}\succeq P^{[s]}Q^{[s]}.
\]
The same lemma makes $(PQ)^{[s]}$ GE. Hence $PQ\in\TNE(r)$. Interchanging the factors proves the assertion for $QP$. Closure of the union and \eqref{eq:TNElevel} follow immediately.
\end{IEEEproof}

\begin{remark}[Why access is a separate requirement]
The nested GR/GE conditions are sufficient for multiplication closure but not for ergodicity. For every $n\ge2$, the identity matrix satisfies $I_n\in\TNE(1)$: its leading block of order one is GR vacuously, and every larger leading identity block is GE. Nevertheless, $I_n$ is not SIA, scrambling, or consensus generating. Thus row-allowable lower-left access is unnecessary for the bare semigroup property of $\TNE$, but it is essential for selecting a class whose members reach the mixing core and inherit SIA and finite-scrambling guarantees.
\end{remark}

\begin{definition}[Almost Sarymsakov classes]
\label{def:TAS}
For $r\in\{1,\ldots,n\}$, define the accessible subclass
\begin{equation}
 \TAS(r):=\{P\in\TNE(r):P_{21}^{[r]}\text{ is row-allowable}\}.
 \label{eq:TASr}
\end{equation}
Set
\begin{equation}
 \TAS:=\bigcup_{r=1}^n\TAS(r),
 \qquad
 \ellv(P):=\min\{r:P\in\TAS(r)\}.
 \label{eq:TAS}
\end{equation}
A member of $\TAS$ is called an \emph{almost Sarymsakov matrix}, or simply \emph{almost Sarymsakov}.
\end{definition}

The integer $\ellv(P)$ is the smallest leading order at which strict expansion and row-allowable access hold while all larger leading blocks remain GE. Equivalently, $\TAS(r)=\TNE(r)\cap\TB(r)$. The conditions depend only on support: at order $s$, direct enumeration uses at most $3^s$ assignments to $I$, $J$, or neither, so scanning $r$ computes $\ellv(P)$ from the switching topology. Since $\TAS(n)=\Tzero$, the classical class is contained in $\TAS$, and each accessible level is support-upward.

\begin{proposition}[Accessible variable-core compatibility]
\label{prop:TAScompatibility}
If $P\in\TAS(r_P)$ and $Q\in\TAS(r_Q)$, then
\begin{equation}
 PQ,QP\in\TAS(\max\{r_P,r_Q\}).
 \label{eq:TAScompatibility}
\end{equation}
Consequently,
\begin{equation}
 \ellv(PQ)\le\max\{\ellv(P),\ellv(Q)\}.
 \label{eq:level}
\end{equation}
\end{proposition}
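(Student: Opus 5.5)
The plan is to split the statement into its algebraic part and its access part. Let $r=\max\{r_P,r_Q\}$. Since $\TAS(r_P)\subseteq\TNE(r_P)$ and $\TAS(r_Q)\subseteq\TNE(r_Q)$, Proposition~\ref{prop:TNEcompatibility} gives $PQ,QP\in\TNE(r)$ directly. So it remains only to show that $(PQ)_{21}^{[r]}$ and $(QP)_{21}^{[r]}$ are row-allowable. When $r=n$ this condition is vacuous and there is nothing to prove, so assume $r<n$.

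\textbf{Step 1: access propagates upward.} If $M\in\TAS(r_M)$ and $s\ge r_M$, then $M_{21}^{[s]}$ is row-allowable. Indeed, every row $i>s$ satisfies $i>r_M$, so by access at level $r_M$ it has a positive entry in some column $a\le r_M\le s$. Applying this with $s=r$ to both $P$ and $Q$ shows that each factor sends every lower row $i>r$ into the order-$r$ core.

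\textbf{Step 2: the order-$r$ leading blocks are row-allowable.} For $M\in\{P,Q\}$, the block $M^{[r]}$ is GR (if $r=r_M$) or GE (if $r>r_M$). In either case it is row-allowable, because the GE and GR properties are defined only for row-allowable matrices. This is the point I expect needs the most care: the proof must state explicitly that membership of $M^{[s]}$ in the GE class presupposes row-allowability. Without that, a factor of smaller level could in principle have a row inside its larger core leaking entirely outside that core.

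\textbf{Step 3: combine as in Theorem~\ref{thm:TBclosure}.} Take a row $i>r$. By Step~1 there is $a\le r$ with $P_{ia}>0$. By Step~2, row $a$ of $Q^{[r]}$ has a positive entry $Q_{ac}>0$ with $c\le r$. Since all terms are nonnegative,
\[
(PQ)_{ic}\ge P_{ia}Q_{ac}>0 .
\]
Hence $(PQ)_{21}^{[r]}\succeq P_{21}^{[r]}Q^{[r]}$ is row-allowable, which gives $PQ\in\TAS(r)$. Interchanging the roles of $P$ and $Q$ gives $QP\in\TAS(r)$.

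\textbf{The level inequality.} Choosing $r_P=\ellv(P)$ and $r_Q=\ellv(Q)$ yields $PQ\in\TAS(\max\{\ellv(P),\ellv(Q)\})$. Since $\ellv(PQ)$ is the minimal level at which $PQ$ lies in $\TAS$, this gives \eqref{eq:level}.
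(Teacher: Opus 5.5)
Your proposal is correct and follows the paper's proof essentially step for step. You reduce to access at order $r$ via Proposition~\ref{prop:TNEcompatibility}, lift the access of $P$ from level $r_P$ to level $r$, and use that $Q^{[r]}$ is row-allowable because GE/GR presuppose it. You then conclude from $(PQ)_{21}^{[r]}\succeq P_{21}^{[r]}Q^{[r]}$, with the level inequality following by minimality.
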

\begin{IEEEproof}
Let $r=\max\{r_P,r_Q\}$. Proposition~\ref{prop:TNEcompatibility} gives $PQ,QP\in\TNE(r)$, so only access at order $r$ remains to be verified. Because $r_P\le r$, every row below $r$ is also below $r_P$ and therefore has a positive entry in the first $r_P$ columns of $P$; hence $P_{21}^{[r]}$ is row-allowable. The leading block $Q^{[r]}$ is GE or GR and thus row-allowable. The lower-left block of $PQ$ satisfies
\begin{equation}
 (PQ)_{21}^{[r]}
 =P_{21}^{[r]}Q^{[r]}+P_{22}^{[r]}Q_{21}^{[r]}.
 \label{eq:TASaccessproduct}
\end{equation}
The first term is row-allowable, so $(PQ)_{21}^{[r]}$ is row-allowable. Thus $PQ\in\TAS(r)$. Interchanging the factors proves the assertion for $QP$, and the level inequality follows by minimality.
\end{IEEEproof}

\begin{theorem}
\label{thm:TASsemigroup}
The class $\TAS$ is a semigroup, and every compact subset is a consensus set. Moreover,
\begin{equation}
 \TAS=\Tzero\quad(n\le2),
 \qquad
 \Tzero\subsetneq\TAS\quad(n\ge3).
 \label{eq:strictdimension}
\end{equation}
\end{theorem}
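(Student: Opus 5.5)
The plan is to take the three assertions in turn: closure from Proposition~\ref{prop:TAScompatibility}, consensus from Lemma~\ref{lem:TBsia}, and the dimension statement \eqref{eq:strictdimension} from a direct analysis for $n=2$ plus one explicit family of matrices for $n\ge3$.

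\emph{Semigroup and consensus.} Closure is immediate from Proposition~\ref{prop:TAScompatibility}. If $P\in\TAS(r_P)$ and $Q\in\TAS(r_Q)$, then $PQ\in\TAS(\max\{r_P,r_Q\})\subseteq\TAS$. For consensus, I use the identity $\TAS(r)=\TNE(r)\cap\TB(r)$ noted after Definition~\ref{def:TAS}. Every member of $\TAS$ therefore lies in some $\TB(r)$ and is SIA by Lemma~\ref{lem:TBsia}; the case $r=n$ is covered since $\TB(n)=\Tzero$, whose members are SIA. Let $\mathcal P\subseteq\TAS$ be compact. By closure, every finite product from $\mathcal P$ lies in $\TAS$ and is therefore SIA. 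The standard compact finite-product criterion then makes $\mathcal P$ a consensus set.

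\emph{The case $n=2$.} We always have $\Tzero=\TAS(n)\subseteq\TAS$, so only the reverse inclusion needs proof. Let $P\in\TAS(1)$. Its $1\times1$ leading block is GR, so it is row-allowable and $p_{11}>0$. Row-allowability of $P_{21}^{[1]}$ gives $p_{21}>0$. Both rows then share column $1$, so $P$ is scrambling. For two-state matrices this already gives the GR property: two disjoint nonempty row sets must be $\{1\}$ and $\{2\}$, and their consequent sets meet in column $1$. Hence $P\in\Tzero$, and $\TAS(2)=\Tzero$ covers the remaining level.

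\emph{The case $n\ge3$.} I extend the matrix $P$ of Example~\ref{ex:upper}. Rows $1,2,3$ are $(1,0,0,\ldots)$, $\tfrac13(e_1+e_2+e_3)^{\mathsf T}$ and $e_2^{\mathsf T}$. For $i\ge4$, row $i$ is $\tfrac12(e_1+e_i)^{\mathsf T}$. I then check four things.
\begin{enumerate}
\item Level $2$ is GR: the leading block is $\begin{bmatrix}1&0\\1/3&1/3\end{bmatrix}$, and the only disjoint pair $\{1\},\{2\}$ has intersecting consequents.
\item Level $2$ has access: row $3$ hits column $2$, and each row $i\ge4$ hits column $1$.
\item Every leading block of order $s\ge3$ is GE. Any set containing row $2$ has consequent containing $\{1,2,3\}$, which meets every nonempty consequent. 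Any set containing row $1$ or some row $i\ge4$ has consequent containing $1$. So the only disjoint configuration is $I=\{3\}$ with $J\subseteq\{1,4,\ldots,s\}$ (or the reverse). Then $F(J)=\{1\}\cup(J\setminus\{1\})$, so $|F(I)\cup F(J)|\ge|J|+1=|I\cup J|$.
\item $P\notin\Tzero$: take $I=\{3\}$ and $J=\{1\}$. Their consequents $\{2\}$ and $\{1\}$ are disjoint and the union has size exactly $2$, so strict expansion fails.
\end{enumerate}
This gives $P\in\TAS(2)\setminus\Tzero$.

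The main obstacle is step 3 for $n\ge4$. A naive padding that sends every extra row only to column $1$ breaks GE, because then $J=\{1,4\}$ has $F(J)=\{1\}$. The self-loops $e_i$ in rows $i\ge4$ are what keep $|F(J)|\ge|J|$, and that set enumeration is the step I would check carefully.
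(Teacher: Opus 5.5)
Your proof is correct and follows the paper's argument: closure comes from Proposition~\ref{prop:TAScompatibility}, SIA membership from Lemma~\ref{lem:TBsia} together with the compact finite-product criterion, and the $n=2$ case from the shared first column of members of $\TAS(1)$. The only difference is the witness for $n\ge3$. The paper uses $A_n$, whose rows $1,2$ equal $e_1^{\mathsf T}$ and whose lower rows are $\tfrac12(e_2+e_i)^{\mathsf T}$, with $I=\{1,2\}$, $J=\{3\}$ violating strict expansion; your padded version of the matrix $P$ from Example~\ref{ex:upper} works equally well and uses the same self-loop device in the extra rows to keep every larger leading block GE.
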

\begin{IEEEproof}
Closure follows from Proposition~\ref{prop:TAScompatibility}. Every member belongs to some $\TB(r)$ and is SIA by Lemma~\ref{lem:TBsia}; closure then makes every finite product SIA, so the compact-consensus criterion applies. For $n=1$ the claim is immediate. For $n=2$, every matrix in $\TAS(1)\subseteq\TB(1)$ is scrambling because all rows share the first column, and $\TAS(2)=\Tzero$; hence no enlargement occurs.

For $n\ge3$, define $A_n$ by
\begin{equation}
 e_i^{\mathsf T}A_n=e_1^{\mathsf T}\ (i=1,2),
 \qquad
 e_i^{\mathsf T}A_n=\tfrac12e_2^{\mathsf T}+\tfrac12e_i^{\mathsf T}\ (i\ge3).
 \label{eq:An}
\end{equation}
Its leading block of order two is GR because its two row supports coincide, and each lower row has a positive entry in column two. Consider a leading block of order $s\ge3$. If two disjoint row sets have disjoint consequent sets, one set must lie in $\{1,2\}$ and the other in $\{3,\ldots,s\}$; otherwise the consequent sets intersect at column one or column two. If the latter set is $J$, its consequent set is $\{2\}\cup J$, while the former consequent set is $\{1\}$. Their union has $|J|+2$ elements, at least the number of selected rows because the first set has at most two elements. Thus every larger leading block is GE. Taking the first set to be $\{1,2\}$ and $J=\{3\}$ gives equality, not strict expansion, so $A_n\notin\Tzero$. Therefore $A_n\in\TAS(2)\setminus\Tzero$.
\end{IEEEproof}

\begin{theorem}[Universal scrambling-horizon bound]
\label{thm:universal}
Assume $n\ge2$. For $m\in\{1,\ldots,n-1\}$, set
\[
 \TAS^{[m]}:=\bigcup_{r=1}^m\TAS(r).
\]
Every product of at least $m!$ matrices from $\TAS^{[m]}$ is scrambling. Furthermore, every product of at least
\begin{equation}
 L_n:=(n-1)(n-1)!
 \label{eq:Ln}
\end{equation}
almost Sarymsakov matrices of order $n$ is scrambling. Equivalently,
\begin{equation}
 \hscr(\TAS)\le L_n.
 \label{eq:TAShorizon}
\end{equation}
\end{theorem}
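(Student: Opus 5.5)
Induction on $m$. Write $H(m)$ for the claim that every product of at least $m!$ matrices from $\TAS^{[m]}$ is scrambling. The target bound $L_n=(n-1)(n-1)!$ should come from $H(n-1)$ together with the classical Sarymsakov horizon. Any product in $\TAS$ is a word whose factors have levels in $\{1,\ldots,n\}$. Level-$n$ factors are Sarymsakov, so they are GR at order $n$. Every factor of level at most $n-1$ is, by Definition~\ref{def:TNE}, GE at order $n$. Hence, by Lemma~\ref{lem:support}, any block containing a level-$n$ factor is GR.

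The base case $m=1$ is immediate: a matrix in $\TAS(1)$ has every row positive in column one, so a single factor is scrambling and $1!=1$.

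For the inductive step from $m-1$ to $m$, I take a product of $m!=m\cdot(m-1)!$ factors from $\TAS^{[m]}$ and cut it into $m$ consecutive blocks, each of length $(m-1)!$. There are two cases for each block.
\begin{itemize}
\item If a block uses only levels $\le m-1$, it is scrambling by $H(m-1)$. Scrambling persists under multiplication on either side, so the whole product is scrambling.
\item Otherwise every block contains a level-$m$ factor. By Proposition~\ref{prop:TAScompatibility}, each block product $B_j$ lies in $\TAS(m)\subseteq\TB(m)$.
\end{itemize}
In the second case I have $m$ factors from $\TB(m)$. Theorem~\ref{thm:TBhorizon} gives horizon $\min\{m,n-1\}=m$, since $m\le n-1$. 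So $B_1\cdots B_m$ is scrambling. Longer products contain a length-$m!$ subword and remain scrambling.

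For the full class, I apply the same trick at level $n$. A product of $(n-1)(n-1)!$ factors is split into $n-1$ blocks of length $(n-1)!$. If some block avoids level $n$, then $H(n-1)$ applies. Otherwise each block lies in $\TAS(n)=\Tzero$ by Proposition~\ref{prop:TAScompatibility}. The classical fact that products of $n-1$ Sarymsakov matrices are scrambling finishes the argument. This yields $\hscr(\TAS)\le L_n$.

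The main obstacle is the bookkeeping in the block argument. The levels of the block products must be controlled exactly, which is where the level monotonicity \eqref{eq:level} is used. One must also ensure that $H(m-1)$ is applied only to blocks lying entirely in $\TAS^{[m-1]}$. A subtle point is that a block containing a level-$m$ factor has level exactly at most $m$, not merely membership in $\TAS$. This holds because all of its factors are in $\TAS^{[m]}$, so Proposition~\ref{prop:TAScompatibility} gives level at most $m$ by the max rule. Membership in $\TB(m)$ then follows from $\TAS(r)\subseteq\TAS(m)$ for $r\le m$, again via Proposition~\ref{prop:TAScompatibility} by multiplying mentally with an element of level $m$. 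It is cleaner, though, to note directly that a block containing a level-$m$ factor lies in $\TAS(m)$ by repeated application of Proposition~\ref{prop:TAScompatibility}.
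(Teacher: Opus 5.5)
Your proof is correct. It uses the same induction and the same two ingredients as the paper: Proposition~\ref{prop:TAScompatibility} to push blocks into $\TAS(m)\subseteq\TB(m)$, and Theorem~\ref{thm:TBhorizon} or the classical $n-1$ result. The difference is the combinatorial step.

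The paper counts the number $q$ of factors of level exactly $m$. If $q\ge m$, it cuts the word adaptively into $m$ subproducts, each containing one chosen level-$m$ factor. If $q\le m-1$, it deletes those factors and uses the run-counting inequality \eqref{eq:runcount} to find a run of $(m-1)!$ lower-level factors.

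You instead fix the partition into $m$ equal blocks of length $(m-1)!$ and read off the dichotomy directly. This avoids \eqref{eq:runcount} and gives the same recursion $N_m=mN_{m-1}$, hence the same thresholds $m!$ and $L_n$.

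The paper's version yields a slightly stronger intermediate fact that your fixed blocks do not state: any product from $\TAS^{[m]}$ containing at least $m$ factors of level $m$ is scrambling, whatever its length and however those factors are spaced. The theorem does not need this.

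One remark in your last paragraph is false and should be deleted: $\TAS(r)\subseteq\TAS(m)$ for $r<m$. A level-$r$ matrix is only guaranteed to be GE, not GR, at order $m$. For instance, $A_n\in\TAS(2)\setminus\TAS(n)$ for $n\ge3$ by Theorem~\ref{thm:TASsemigroup}.

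For the same reason, the level inequality \eqref{eq:level} alone does not place a block in $\TB(m)$, so it is not where the work is done. What you need is the membership form \eqref{eq:TAScompatibility}, applied with the maximum level attained at $m$. That is exactly what your bullet and your closing sentence use, so the main argument is unaffected.
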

\begin{IEEEproof}
It is enough to prove the two stated threshold lengths because stochastic multiplication preserves scrambling. We first prove the truncated assertion by induction on $m$. For $m=1$, every factor in $\TAS(1)$ is scrambling by Theorem~\ref{thm:TBhorizon}. Assume the assertion holds for $m-1$, and consider a product of $m!$ factors from $\TAS^{[m]}$. Let $q$ be the number of factors whose minimum level is exactly $m$.

If $q\ge m$, choose $m$ of those factors in their product order and cut the product into $m$ consecutive nonempty subproducts, each containing one chosen level-$m$ factor. All other factors in a subproduct have level at most $m$. Repeated application of Proposition~\ref{prop:TAScompatibility} places each subproduct in $\TAS(m)\subseteq\TB(m)$. The product of these $m$ block factors is scrambling by Theorem~\ref{thm:TBhorizon}.

If $q\le m-1$, delete the level-$m$ factors. The remaining factors form at most $q+1\le m$ consecutive runs, all drawn from $\TAS^{[m-1]}$. Were every run shorter than $(m-1)!$, the total number of factors would be at most
\begin{equation}
 q+(q+1)((m-1)!-1)\le m!-1,
 \label{eq:runcount}
\end{equation}
a contradiction. Hence one run contains a consecutive subproduct of $(m-1)!$ lower-level factors. That subproduct is scrambling by the induction hypothesis, and multiplication by the factors before and after it preserves scrambling. This proves the truncated assertion.

Now consider $L_n$ arbitrary factors from $\TAS$, and let $q$ be the number of level-$n$ factors. Since $\TAS(n)=\Tzero$, if $q\ge n-1$, the product can be cut into $n-1$ consecutive subproducts, each containing a level-$n$ factor. Proposition~\ref{prop:TAScompatibility} places every subproduct in $\Tzero$, and the classical $n-1$ factor result makes their product scrambling. If $q\le n-2$, deletion leaves at most $q+1\le n-1$ runs from $\TAS^{[n-1]}$. If every run had fewer than $(n-1)!$ factors, then
\begin{equation}
 L_n\le(n-2)+(n-1)((n-1)!-1)=L_n-1,
\end{equation}
which is impossible. A run of at least $(n-1)!$ factors is scrambling by the truncated assertion with $m=n-1$, and therefore so is the full product.
\end{IEEEproof}

By Lemma~\ref{lem:compact}, every compact $\mathcal P\subseteq\TAS$ has $\theta_{\mathcal P,L_n}<1$ and satisfies \eqref{eq:compactrate} with $h=L_n$.

\begin{remark}[Origin and sharpness of the bound]
The factorial reflects a worst-case alternation of levels: either enough highest-level factors form fixed-core blocks, or a long lower-level run occurs. The bound is not claimed sharp in general, but it is sharp for $n=3$, where $L_3=4$. Indeed, consider
\[
 A_1=\begin{bmatrix}
 1&0&0\\
 1/2&1/2&0\\
 0&1&0
 \end{bmatrix},
 \qquad
 A_2=\begin{bmatrix}
 1/2&1/2&0\\
 0&0&1\\
 1/2&0&1/2
 \end{bmatrix},
\]
\[
 A_3=\begin{bmatrix}
 1&0&0\\
 1&0&0\\
 0&1/2&1/2
 \end{bmatrix}.
\]
These matrices satisfy $A_1,A_3\in\TAS(2)$ and $A_2\in\TAS(3)$, but
\[
 A_1A_2A_3=\begin{bmatrix}
 1&0&0\\
 1/2&1/4&1/4\\
 0&1/2&1/2
 \end{bmatrix}
\]
is not scrambling. Hence three factors do not suffice, whereas Theorem~\ref{thm:universal} guarantees scrambling after four factors.
\end{remark}

\begin{example}[Why higher-order GE is needed]
\label{ex:higherGE}
Let
\[
 P_b=\begin{bmatrix}
 1&0&0\\
 1&0&0\\
 0&1&0
 \end{bmatrix}\in\TB(2),
\]
\[
 Q_b=\begin{bmatrix}
 1/2&1/2&0\\
 0&0&1\\
 1/2&0&1/2
 \end{bmatrix}\in\TB(3).
\]
The full block $P_b$ is not GE because the disjoint sets $\{1,2\}$ and $\{3\}$ have the consequent sets $\{1\}$ and $\{2\}$, respectively. Moreover,
\[
 P_bQ_b=\begin{bmatrix}
 1/2&1/2&0\\
 1/2&1/2&0\\
 0&0&1
 \end{bmatrix}
\]
has the two terminal classes $\{1,2\}$ and $\{3\}$. Therefore $\bigcup_r\TB(r)$ need not be a semigroup. The matrix $P_b$ fails the larger-order GE requirement and therefore does not belong to $\TNE(2)$. The higher-order GE conditions in Definition~\ref{def:TNE} supply the algebraic compatibility needed at the larger core order; row-allowable access is a separate requirement imposed later in Definition~\ref{def:TAS}.
\end{example}

\begin{remark}[Common ordering]
For a fixed permutation matrix $\Pi$, the conjugate class $\Pi^{\mathsf T}\TAS\Pi$ is a semigroup, but factors based on different orderings cannot generally be mixed. For example, let
\[
 A=\begin{bmatrix}
 1&0&0\\
 1&0&0\\
 0&1/2&1/2
 \end{bmatrix}\in\TAS,
\]
\[
 B=\begin{bmatrix}
 1/2&1/2&0\\
 0&0&1\\
 0&0&1
 \end{bmatrix}.
\]
The matrix $B$ is almost Sarymsakov under the ordering $(2,3,1)$. Nevertheless,
\[
 AB=\begin{bmatrix}
 1/2&1/2&0\\
 1/2&1/2&0\\
 0&0&1
 \end{bmatrix}
\]
is not SIA. Thus the core dimension may vary only along one common nested ordering; the semigroup property does not extend to arbitrary mixtures of differently ordered classes.
\end{remark}

\begin{remark}\label{rem:cxpm}
If in~(\ref{eq:E}), $R\in\Tzero$, Lemma~\ref{lem:support} implies $\Tzero=\E(R)=\U(R)\subsetneq\TAS$ for $n\geq 3$; otherwise,
consider, for example, $R$ in~(\ref{eq:RP}) and $\tilde{R}$ in the following:
\[
 \tilde{R}=\begin{bmatrix}
 1&0&0\\
 1&0&0\\
 0&1/2&1/2
 \end{bmatrix}.
\]
Both matrices are SIA and GE, but not GR; in addition, $R^2$ and $\tilde{R}^2$ are in $\Tzero$. More importantly, both matrices
are in $\TAS(2)$. Comparing $R$ and $\tilde{R}$ suggests $\Tzero\subsetneq\E(R)=\U(R)\subsetneq\TAS$. Since $\TAS$ is an SIA semigroup, these counterexamples show that $\U(R)$ is not the largest SIA semigroup containing $\E(R)$.
\end{remark}

\section{Relations Among the Classes}
Table~\ref{tab:relations} summarizes the structural roles of the classes and their relation to the classical Sarymsakov class $\Tzero$. Here ``incomparable'' means that neither inclusion holds. The class $\TNE$ isolates multiplication compatibility of nested leading-block expansions, but it is not an SIA class. The accessible subclass $\TAS$ adds row-allowable entry into the selected core and is the variable-core consensus semigroup. The exact fixed-core horizon is $h_{n,r}$ in \eqref{eq:hnr}, whereas \eqref{eq:TAShorizon} is a dimension-only upper bound and is not claimed to be minimal except at $n=3$.

\begin{table}[!t]
\caption{Structural Roles and Relations to $\Tzero$}
\label{tab:relations}
\centering
\footnotesize
\setlength{\tabcolsep}{3pt}
\renewcommand{\arraystretch}{1.10}
\begin{tabular}{@{}p{0.17\columnwidth}p{0.32\columnwidth}p{0.43\columnwidth}@{}}
\toprule
Class & Structural basis & Relation and guarantee \\
\midrule
$\U(R)$ & powers of a prescribed $R$ & contains $\Tzero$; all members are SIA \\
$\TB(1)$ & fixed core with access & proper subset of $\Tzero$ for $n\ge2$ \\
$\TB(r)$  & fixed core with access & incomparable with $\Tzero$; SIA \\
$2\le r<n$ & &\\
$\TB(n)$ & full order & equals $\Tzero$ \\
$\TNE$ & nested GR/GE only & properly contains $\Tzero$ for $n\ge2$; not SIA in general \\
$\TAS$ & nested GR/GE with access & contains $\Tzero$, strictly for $n\ge3$; consensus semigroup \\
\bottomrule
\end{tabular}
\end{table}

For $r=1$, every row of a matrix in $\TB(1)$ has a positive first-column entry, so the matrix is scrambling; strict inclusion for $n\ge2$ follows from the rank-one matrix $\one e_2^{\mathsf T}$. For $r=n$, the lower-left condition is vacuous and $\TB(n)=\Tzero$. For every $2\le r<n$, the rank-one matrix $\one e_{r+1}^{\mathsf T}$ belongs to $\Tzero\setminus\TB(r)$, whereas the matrix whose first $r$ rows equal $e_1^{\mathsf T}$ and whose remaining rows equal $(e_2+e_i)^{\mathsf T}/2$ belongs to $\TB(r)\setminus\Tzero$.

Because $\TNE(n)=\Tzero$, the classical class is contained in $\TNE$. For $n\ge2$, the identity matrix lies in $\TNE(1)\setminus\Tzero$, so the inclusion is strict, but this enlargement has no consensus implication. Theorem~\ref{thm:TASsemigroup} gives the dimension-qualified relation for the accessible class $\TAS$. Thus $\TAS$ is not merely the set-theoretic union of the fixed-core classes: higher-order GE supplies multiplication compatibility, while row-allowability supplies access to the mixing core.

\section{Conclusion}
This note has developed three support-based constructions of
multiplication-closed stochastic-matrix families. The first replaces exact matching with prescribed power-support patterns
by support inclusion, yielding a finite power-generated semigroup that can enlarge the earlier equal-pattern
construction. The second uses a fixed ordered mixing core and row-allowable access to obtain an exact scrambling horizon and
uniform convergence guarantees for compact switched families.
The third and principal contribution is the almost Sarymsakov class, which permits the effective core dimension to vary among
factors while preserving multiplication closure and consensus.

The analysis also clarifies the roles of the structural assumptions. Nested expansion conditions provide compatibility
between different core dimensions, row-allowability prevents states outside the core from forming a closed group, and a
common ordering ensures that the leading-block conditions remain consistent under multiplication. The examples establish
strict class relations, demonstrate sharpness in low dimensions, and show that these assumptions cannot generally be removed.
Future work includes tighter scrambling-horizon estimates, efficient level-computation algorithms, and extensions allowing
multiple compatible core orderings.

%\section*{Acknowledgment} The author acknowledges the use of OpenAI ChatGPT (GPT-5.6 Pro) to assist with language editing throughout the manuscript, including the refinement of wording, grammar, and sentence-level clarity. All AI-assisted revisions were independently reviewed and verified by the author. The mathematical results, proofs, examples, interpretations, and conclusions were checked by the author, who assumes full responsibility for the content of the paper.

\bibliographystyle{IEEEtran}
\bibliography{paper_A_sarymsakov_TAC_almost_semigroup_emphasis}

\end{document}